\documentclass[12pt, 12pt]{elsarticle}
\usepackage{amsmath, amsthm, amscd, amsfonts, amssymb, graphicx, color, mathrsfs}
\usepackage[bookmarksnumbered, colorlinks, plainpages]{hyperref}
\hypersetup{colorlinks=true,linkcolor=red, anchorcolor=green, citecolor=cyan, urlcolor=red, filecolor=magenta, pdftoolbar=true}
\usepackage{color,soul}

\newtheorem{theorem}{Theorem}[section]
\newtheorem{lemma}[theorem]{Lemma}
\newtheorem{proposition}[theorem]{Proposition}

\theoremstyle{definition}
\newtheorem{definition}[theorem]{Definition}
\newtheorem{example}[theorem]{Example}

\theoremstyle{remark}
\newtheorem{remark}[theorem]{Remark}
\numberwithin{equation}{section}

\DeclareMathAlphabet\EuScript{U}{eus}{m}{n}
\DeclareMathAlphabet\EuScriptBold{U}{eus}{b}{n}
\DeclareMathAlphabet\Eurm{U}{eur}{m}{n}
\DeclareMathAlphabet\Eurb{U}{eur}{b}{n}
\newcommand{\mathcalb}{\EuScriptBold}
\newcommand{\AAA}{ {\mathscr A} }

\newcommand{\WWW}{\Omega}

\newcommand{\iii}{\infty}

\newcommand{\negj}{\;\!\!}

 \newcommand{\zza}[1]{\left\vert{#1}\right\vert}   
 \newcommand{\zzao}[1]{\vert{#1}\vert}
 \newcommand{\zzaj}[1]{\bigl\vert{#1}\bigr\vert}
 
 \newcommand{\zzat}[1]{\biggl\vert{#1}\biggr\vert}
 \newcommand{\zzac}[1]{\Biggl\vert{#1}\Biggr\vert}

 \newcommand{\zzmo}[1]{({#1})}
 \newcommand{\zzmj}[1]{\bigl({#1}\bigr)}
 \newcommand{\zzmd}[1]{\Bigl({#1}\Bigr)}
 \newcommand{\zzmt}[1]{\biggl({#1}\biggr)}
 
 \newcommand{\nno}[1]{\zza{\!\!\;\zza{\:\!#1\!\:}\!\!\;}}
 \newcommand{\nnoo}[1]{\zzao{\negj\!\!\;\zzao{\negj\:\!#1\!\:\negj}\!\!\;\negj}}
 \newcommand{\nnoj}[1]{\zzaj{\negj\!\!\;\zzaj{\negj\:\!#1\:\!\negj}\!\!\;\negj}}

\newcommand{\bbdef}{\begin{definition}\sl }
\newcommand{\eedef}{\end{definition}\rm }
\newcommand{\bbl}{\begin{lemma}}
\newcommand{\eel}{\end{lemma}}

\newcommand{\eqd}{\stackrel{def}{=}}
\newcommand{\AAt}{ {\mathscr{A}}_t }

\newcommand{\HH}{{\mathcal H}}
\newcommand{\BH}{ {\mathcalb B}({\mathcal H})}
\newcommand{\ccc}{ {\mathcalb C}}

\newcommand{\ccq}{ {\mathcalb C}_q({\mathcal H}) }

\newcommand{\dt}{\,d\mu(t)}
\newcommand{\LJ}{ L_1(\WWW,\mu)}

\newcommand{\inN}{{\in\mathbb N}}

\newcommand{\BB}{\mathcalb B}

\newcommand{\MM}{\mathfrak{M}}

\newcommand{\GG}{\mathbb{G}}

\newcommand{\befz}{\mathcalb{B}(E,F^*)}

\newcommand{\lpwE}{\ell^w_p(E)}

\newcommand{\Dj}{\mbox{\rule[.75ex]{.4em}{.7pt}\kern-.4em D}}
\newcommand{\djm}{\mbox{\kern.1em\rule[1.2ex]{.3em}{.7pt}\kern-.4em d}}

\journal{Bulletin of Iranian Mathematical Society}

\begin{document}

\begin{frontmatter}



\title{Gel'fand integration of $\BB(E,F^*)$-valued functions with emphasis on $(q,p)$-summing operators}


\author{Matija Milovi\'c}\ead{matija.milovic@matf.bg.ac.rs}
\author{Stefan Milo\v sevi\'c}\ead{stefan.milosevic@matf.bg.ac.rs}

\address{University of Belgrade, Department of  Mathematics, Studentski trg 16,
P.O.box 550, 11000 Belgrade, Serbia}

\begin{abstract}
We generalize results concerning Gel'fand integration of functions taking values in the space of operators on Hilbert spaces to certain Banach spaces. Building on ideas from \cite{M24} we provide sufficient conditions for the Gel'fand integral to be $(q,p)-$summing and we use the developed techniques to answer a question posed in the mentioned article. Applications to positive operator-valued functions between certain function spaces are also given.
\end{abstract}

\begin{keyword}
Gel'fand integration \sep Operator valued functions \sep $(q,p)$-summing operators, Banach lattices
\MSC[2020] primary 47B10 \sep 47B65 \sep 46G10 \sep secondary   47A30 \sep 46B42\sep  46E40
\end{keyword}

\end{frontmatter}

\section{Introduction}

Throughout the paper, $E$ and $F$ will always denote complex Banach spaces with norms $\nnoo{\cdot}_E$ and $\nnoo{\cdot}_F,$ respectively, unless otherwise stated. By $E^*,$ we will denote the topological dual of the space $E$ and the action of $y\in E^*$ on $x\in E$ will be denoted by $\langle y,x\rangle.$ By an operator $A\colon E\to F,$ we will always mean a bounded linear operator with its operator norm denoted by $\|A\|,$ and the space of all such operators will be denoted by $\BB(E,F).$ In the case when $E$ and $F$ are Banach lattices, an operator $T\colon E\to F$ will be called positive and denoted by $T\geqslant 0$ if $Tx\geqslant 0$ for every $x\in E$ such that $x\geqslant 0.$ Also, by $\ell_p(E)$ and $\lpwE$ (for $p\geqslant 1$), we will denote Banach spaces of $p$-summable sequences in $E$ and weakly $p$-summable sequences in $E,$ respectively. Banach space $\ell_p(\mathbb{C})$ will be denoted by $\ell_p.$ If there is no risk of ambiguity, we will use $\nnoo{\cdot}_p$ and $\nnoo{\cdot}_{p,w}$ to denote norms in $\ell_p(E)$ and $\lpwE.$ Moreover, for $p\geqslant 1,$ we denote by $p'$ the conjugate exponent, i. e., the number satisfying $\frac{1}{p}+\frac{1}{p'} = 1.$ As we will need some results from the duality of sequence spaces, we state the following lemma for the comfort of the reader. Details can be found in \cite[Proposition~1.3.3]{HNVW}.
\begin{theorem}\label{duallpban}
For $1\leqslant p < +\infty$ the following duality holds $\zzmj{\ell_p(E)}^* \cong \ell_{p'}(E^*)$ in the sense that every $\Phi\in\zzmj{\ell_p(E)}^*$ is of the form $\Phi((x_n)_{n=1}^\infty) = \sum_{n=1}^\infty \langle\phi_n,x_n\rangle,$ for some $(\phi_n)_{n=1}^\iii\in\ell_{p'}(E^*)$ such that $\nnoo{\Phi} = \nnoo{(\phi_n)_{n=1}^\iii}_{p'}.$
\end{theorem}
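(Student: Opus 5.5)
We construct the isometric isomorphism $J\colon \ell_{p'}(E^*)\to \zzmj{\ell_p(E)}^*$ given by $J\zzmj{(\phi_n)_{n=1}^\iii}\zzmj{(x_n)_{n=1}^\iii} = \sum_{n=1}^\iii \langle\phi_n,x_n\rangle$ and show it is well defined, isometric and onto. For well-definedness and the upper bound, Hölder's inequality gives
\[
\zza{\sum_{n=1}^\iii \langle\phi_n,x_n\rangle}\leqslant \sum_{n=1}^\iii \nnoo{\phi_n}_{E^*}\nnoo{x_n}_E\leqslant \nnoo{(\phi_n)}_{p'}\nnoo{(x_n)}_{p},
\]
so the series converges absolutely, $J(\phi)$ is a bounded linear functional, and $\nnoo{J(\phi)}\leqslant\nnoo{\phi}_{p'}$.

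For the reverse inequality in the case $p>1$, fix $\varepsilon>0$ and a finite $N$. For each $n\leqslant N$ choose a unit vector $u_n\in E$ with $\langle\phi_n,u_n\rangle$ real and at least $(1-\varepsilon)\nnoo{\phi_n}$. Set $x_n = \nnoo{\phi_n}^{p'-1}u_n$ for $n\leqslant N$ and $x_n=0$ otherwise. Then
\[
J(\phi)(x)\geqslant(1-\varepsilon)\sum_{n\leqslant N}\nnoo{\phi_n}^{p'}
\quad\text{and}\quad
\nnoo{x}_p=\Bigl(\sum_{n\leqslant N}\nnoo{\phi_n}^{p'}\Bigr)^{1/p}.
\]
Dividing, we obtain $\nnoo{J(\phi)}\geqslant(1-\varepsilon)\bigl(\sum_{n\leqslant N}\nnoo{\phi_n}^{p'}\bigr)^{1/p'}$, and letting $N\to\iii$ and $\varepsilon\to0$ gives the reverse bound. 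For $p=1$ we have $p'=\iii$; here we take $x$ to be a single vector $u_n$ placed at a coordinate $n$ where $\nnoo{\phi_n}$ is close to $\sup_k\nnoo{\phi_k}$.

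For surjectivity, let $\Phi\in\zzmj{\ell_p(E)}^*$ and let $\iota_n\colon E\to\ell_p(E)$ be the isometric embedding into the $n$-th coordinate. Define $\phi_n=\Phi\circ\iota_n\in E^*$. By linearity, $\Phi(x)=\sum_{n\leqslant N}\langle\phi_n,x_n\rangle$ for every finitely supported $x$. Applying the norm-attaining construction above to these finitely supported test vectors gives $\bigl(\sum_{n\leqslant N}\nnoo{\phi_n}^{p'}\bigr)^{1/p'}\leqslant\nnoo{\Phi}$ uniformly in $N$, so $(\phi_n)\in\ell_{p'}(E^*)$. Since $p<\iii$, finitely supported sequences are dense in $\ell_p(E)$. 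Both $\Phi$ and $J\zzmj{(\phi_n)}$ are continuous and agree on this dense set, so they coincide, and the norm equality follows from the isometry established above.

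The main obstacle is the surjectivity step, specifically establishing the uniform bound on $\nnoo{(\phi_n)}_{p'}$ from $\Phi$ alone. It is handled by reusing the near-norming vectors $u_n$ from the isometry argument. Density of finitely supported sequences, which is exactly where $p<\iii$ is needed, then closes the argument.
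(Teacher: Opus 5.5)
Your proof is correct and complete. The H\"older bound gives $\|J(\phi)\|\leqslant\|\phi\|_{p'}$. The near-norming vectors $x_n=\|\phi_n\|^{p'-1}u_n$, using $(p'-1)p=p'$, give the reverse inequality, with the separate single-coordinate argument for $p=1$. The coordinate functionals $\phi_n=\Phi\circ\iota_n$ satisfy the uniform bound $\bigl(\sum_{n\leqslant N}\|\phi_n\|^{p'}\bigr)^{1/p'}\leqslant\|\Phi\|$. Density of finitely supported sequences for $p<\infty$ then yields exactly the stated representation with $\|\Phi\|=\|(\phi_n)_{n=1}^\infty\|_{p'}$.

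The paper does not prove this statement itself; it only refers to \cite[Proposition~1.3.3]{HNVW}, which places the duality within the general theory of Bochner spaces. Your route is a self-contained, elementary argument that uses only the discrete structure of $\ell_p(E)$. The isometric coordinate embeddings $\iota_n$ produce the representing sequence directly, so no Radon--Nikod\'ym-type argument, and hence no hypothesis on $E^*$, is needed. This contrasts with $L_p(S;E)^*\cong L_{p'}(S;E^*)$ for non-atomic measures, where such a hypothesis is required. The citation buys brevity and context. Your argument buys transparency: it shows exactly where $p<\infty$ enters (density of finitely supported sequences) and why $p=1$ has to be treated separately (the exponent $p'-1$ degenerates).
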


As a direct consequence, we get the following
\bbl\label{UBP} Let $(\phi_n)_{n=1}^\iii$ be a sequence in a dual Banach space $E^*$ and $1\leqslant p<+\iii$. Then, $(\phi_n)_{n=1}^\iii\in\ell_{p'}(E^*)$ if and only if for every $(x_n)_{n=1}^\iii\in\ell_p(E)$ holds  $\sum_{n=1}^{\iii}|\langle\phi_n,x_n\rangle|<+\iii.$
\eel
\begin{proof}
The proof of one implication follows directly for Holder's inequality, while we prove the converse by contradiction using the Uniform Boundedness Principle and Theorem \ref{duallpban}.
\end{proof}
For a norm $\alpha$ on an algebraic tensor product $E\otimes F,$ by $E\widehat\otimes_\alpha F$ we will denote the completion of the tensor product under that norm. By a $\varepsilon,\pi$ and $\Delta_p$ we will denote, respectively, the injective norm, the projective norm, and the natural norm on the $p$-integrable functions (see Chapter 7 in \cite{DF}). We note that $\ell_p (E) = \ell_p\widehat{\otimes}_{\Delta_p}E,$ while $\ell_p\widehat{\otimes}_\varepsilon E$ is isomorphic to a subspace of $\lpwE$ (proofs and more details can be found in sections 8 and 9 of \cite{DF}). Also, we will make use of the duality
\begin{equation}\label{dualnostBEF}
\zzmj{E\widehat{\otimes}_\pi F}^* \cong \befz,
\end{equation}
proof of which can be found in \cite[Proposition~3.2]{DF}.
\bbdef
An operator $T\colon E\to F$ is said to be $(q,p)$-summing, for $1\leqslant p\leqslant q\leqslant+\iii,$ if there exists $C>0$ such that
$$\zzmt{\sum_{k=1}^n \nnoj{T x_k}_F^q}^{1/q} \leqslant C\cdot \zzmt{\sup_{\|y\|_{E^*}\leqslant1} \sum_{k=1}^n |\langle y,x_k\rangle|^p}^{1/p},$$
for every  $x_1,\ldots,x_n\in E.$ The least $C$ such that the previous inequality holds will be denoted by $\pi_{q,p}(T)$ and the class of all $(q,p)$-summing operators will be denoted by $\Pi_{q,p}(E,F).$
\eedef

Before moving on, we note that in the case $q = +\infty$ a straightforward calculation shows that every $T\in\BB(E,F)$ is $(\iii,p)$-summable with $\pi_{\iii,p}(T)=\nnoo{T}.$ So we will only deal with the case $q<+\iii,$ although most of the statements remain valid considering that $\Pi_{\iii,p}(E,F)=\BB(E,F)$ isometrically.

If an operator $T\colon E\to F$ is $(q,p)$-summing and $(x_n)_{n=1}^\iii\in\lpwE$ by the previous definition, the sequence $(Tx_n)_{n=1}^\iii$ belongs to $\ell_q(F).$ Therefore, each $(q,p)$-summing operator induces an operator $\widehat{T}\colon\lpwE\to\ell_q(F)$ defined by $\widehat{T}((x_n)_{n=1}^\infty) = (Tx_n)_{n=1}^\infty$ and it is straightforward to see that $\|\widehat{T}\| = \pi_{q,p}(T).$ Also, since finite sequences belong to $\lpwE,$ if the operator $\widehat{T}$ from $\lpwE$ to $\ell_q(F)$ is defined in a previously described manner, the operator $T$ is $(q,p)$-summing. Moreover, by the Closed Graph Theorem, it can be shown that $T$ is $(q,p)$-summing if and only if $\widehat{T}(\lpwE)\subset\ell_q(F)$ (details can be found in \cite[Proposition~2.1]{DJT}). Furthermore, due to the connection of the injective tensor norm and $\lpwE,$ the previous definition can also be reformulated as (details can be found in \cite[Proposition~11.1]{DF} and \cite[Exercise~11.20]{DF}).
\bbl\label{qpekv1}
An operator $T\colon E\to F$ is $(q,p)$-summing if and only if the operator
$$id\otimes T\colon\ell_p\otimes_{\varepsilon} E \to \ell_q\otimes_{\Delta_q}F$$
is continuous, where $id$ denotes the canonical embedding of $\ell_p$ into $\ell_q$ for $1\leqslant p\leqslant q<+\iii.$
\eel
For the comfort of the reader, we provide the following lemma that will be used in the sequel.
\bbl\label{qpekv2}
An operator $T\colon E\to F^*$ is $(q,p)$-summing if and only if for every $(x_n)_{n=1}^\infty\in\lpwE$ and $(y_n)_{n=1}^\infty\in\ell_{q'}(F)$ holds
$\sum_{n=1}^\infty \zzao{\langle Tx_n,y_n\rangle} < +\infty.$
In that case we have the following formula
\begin{equation}\label{qpekv2pom}
\pi_{q,p}(T) = \sup_{\substack{\|(x_n)_{n=1}^\infty\|_{p,w}\leqslant 1\\ \|(y_n)_{n=1}^\infty\|_{q'}\leqslant 1}} \sum_{n=1}^\infty \zzao{\langle Tx_n,y_n\rangle}.
\end{equation}
\eel
\begin{proof}
If $T$ is $(q,p)$-summing, as already noted, $(Tx_n)_{n=1}^\iii\in\ell_q (F^*)$ for every $(x_n)_{n=1}^\iii\in\lpwE$ and $\nnoo{(Tx_n)_{n=1}^\iii}_q \leqslant\pi_{q,p}(T)\cdot\nnoo{(x_n)_{n=1}^\iii}_{p,w}.$ Hence
\begin{multline*}
\sum_{n=1}^\infty \zzao{\langle Tx_n,y_n\rangle} \leqslant \sum_{n=1}^\infty \nnoj{Tx_n}_{F^*}\cdot\nnoj{y_n}_F\\
\leqslant \nnoj{\zzmj{T x_n}_{n=1}^\infty}_q\cdot\nnoj{\zzmj{y_n}_{n=1}^\infty}_{q'} \leqslant \pi_{q,p}(T)\cdot \nnoj{\zzmj{x_n}_{n=1}^\infty}_{p,w}\cdot \nnoj{\zzmj{y_n}_{n=1}^\infty}_{q'}<+\iii.
\end{multline*}

Conversely, let $(x_n)_{n=1}^\iii\in\ell_p^w(E)$ be arbitrary. Due to the assumption, by applying Lemma \ref{UBP} to the sequence $(Tx_n)_{n=1}^\iii,$ we get that it belongs to $\ell_q(F^*),$ implying proclaimed $(q,p)$-summability of the operator $T.$ 

If the assumptions hold, for every $(x_n)_{n=1}^\iii$ in the closed unit ball of in $\ell^w_p(E)$ we get
$$\nnoj{\zzmj{Tx_n}_{n=1}^\infty}_q=\sup_{\|(y_n)_{n=1}^\infty\|_{q'}\leqslant 1}\zzac{\sum_{n=1}^\infty \langle Tx_n,y_n\rangle}=\sup_{\|(y_n)_{n=1}^\infty\|_{q'}\leqslant 1} \sum_{n=1}^\infty \zzao{\langle Tx_n,y_n\rangle},$$
due to Theorem \ref{duallpban}. Finally,
$$\pi_{q,p}(T)=\sup_{\|(x_n)_{n=1}^\iii\|_{p,w}\leqslant1}\nnoj{\zzmj{Tx_n}_{n=1}^\infty}_q=\sup_{\substack{\|(x_n)_{n=1}^\infty\|_{p,w}\leqslant 1\\ \|(y_n)_{n=1}^\infty\|_{q'}\leqslant 1}} \sum_{n=1}^\infty \zzao{\langle Tx_n,y_n\rangle},$$
completing the proof.
\end{proof}
The next lemma provides a wide class of $(q,p)$-summing operators in one particular case. Namely,
\bbl\label{pozneplp}
Let $K$ be a compact space and $(X,\nu)$ be an arbitrary measure space. Then every positive operator $T\in\BB(C(K),L_q(X,\nu))$ is $(q,p)$-summing with $\pi_{q,p}(T) = \nno{T}.$
\eel
\begin{proof}
By the previous lemma \ref{qpekv1}, the result is a consequence of \cite[Theorem~7.3]{DF}. See also \cite[Corollary~11.2]{DF}.
\end{proof}
In the case $q=p$, a $(p,p)$-summing operator will be called $p$-summing, and $\Pi_p(E,F)$ and $\pi_p(T)$ will denote the ideal of $p$-summing operators and $p$-summing norm of the operator $T,$ respectively. An important feature that we will use in this paper is that in this case, the space $\Pi_p(E,F^*)$ has a Banach space predual. Namely, for $1\leqslant p<+\infty,$ it holds
\begin{equation}\label{predpsum}
\zzmd{E\widehat{\otimes}_{d_{p'}}F}^* = \Pi_p (E,F^*),
\end{equation}
where $d_p$ is Chevet - Saphar tensor norm given by
$$d_p(z) = \inf\left\{\nnoj{(x_j)_{j=1}^n}_{p',w}\cdot\nnoj{(y_j)_{j=1}^n}_p : z = \sum_{j=1}^n x_j\otimes y_j\right\},\quad\text{for every}\quad z\in E\otimes F.$$
Proofs and more details can be found in sections 6.2. and 6.3. of \cite{R}.

\begin{remark}
    We note that for certain pairs of $q$ and $p,$ the ideal $\Pi_{q,p}(E,F)$ can coincide with the space of all bounded operators $\BB(E,F),$ even for infinite-dimensional Banach spaces. Specially, in the case of the lemma \ref{pozneplp}, for $q>2$ and $1\leqslant p < q$ every bounded operator $T\colon C(K)\to L_q(X,\nu)$ is $(q,p)$-summing (see \cite[Corollary~10.10]{DJT}). Therefore, the mentioned lemma provides us with useful conclusions when $p=q.$
    
    Also, for $1\leqslant q\leqslant2,$ every bounded operator $T\colon C(K)\to L_q(X,\nu)$ is $2$-summing (see \cite[Theorem~3.5]{DJT}), and the result cannot be improved.
\end{remark}

For a measure space $(\WWW,\mathfrak{M},\mu)$ (notation that will also be used in the sequel), a function $f\colon\WWW\to E^*$ will be called weakly$^*$ measurable (integrable) if for every $x\in E$ the scalar function $t\to\langle f(t),x\rangle$ is measurable (integrable). In the case of weakly$^*$ integrable function, due to \cite[Lemma~3.1]{DU}, for every $E\in\mathfrak{M}$ there exists $x_E^*\in E^*$ such that for every $x\in E$ holds $\langle x_E^*,x \rangle = \int_\WWW\langle f(t),x\rangle d\mu(t).$ We will denote $x_E^*$ by $\int_E f(t) d\mu(t)$ and is also called the Gel'fand integral or the weak$^*$ integral of $f$ over $E.$ Likewise, a weakly$^*$ integrable function will also be called Gel'fand integrable. Furthermore, we say that two functions $f,g\colon\WWW\to E^*$ are weakly$^*$ equivalent if for every $x\in E$ holds $\langle f(t),x\rangle = \langle g(t),x\rangle$ almost everywhere, and by $\GG(\WWW,\mathfrak{M},\mu, E^*)$ we denote the space of classes of weakly$^*$ equivalent Gel'fand integrable functions $f\colon\WWW\to E^*$ with norm defined by
$$\nnoj{f}_\GG = \sup_{\nnoo{x}_E\leqslant 1} \int_\WWW \zzaj{\langle f(t),x\rangle} d\mu(t).$$
In this paper, our primary focus will be on functions that take values in $\befz$ which we will also refer to as operator valued (o.v.) functions. Since this space is a dual of a Banach space, as already noted in \eqref{dualnostBEF}, we can define the Gel'fand integral for $\befz$-valued functions. Similarly to \cite[Lemma~1.1]{KMM}, the conditions for measurability and integrability for $\befz$-valued functions can be simplified. More precisely, the following holds.
\begin{lemma}\label{merljivost}
An operator-valued function $\AAA\colon\WWW\to\befz$ is weakly$^*$ measurable (integrable) if and only if for every $x\in E$ and $y\in F$ scalar functions $t\mapsto\langle\AAt x, y\rangle$ are measurable (integrable). 
\end{lemma}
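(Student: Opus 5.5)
The plan is to use the duality \eqref{dualnostBEF}, which identifies $\befz$ with $\zzmj{E\widehat{\otimes}_\pi F}^*$ via $\langle A, x\otimes y\rangle = \langle Ax,y\rangle$. Under this identification, weak$^*$ measurability (integrability) of $\AAA$ means that for every $z\in E\widehat{\otimes}_\pi F$ the scalar function $t\mapsto\langle \AAt, z\rangle$ is measurable (integrable). The direction ``only if'' is then immediate: take $z=x\otimes y$.

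For the converse, I would argue in two stages, first for the algebraic tensor product and then for its completion. If $z=\sum_{j=1}^n x_j\otimes y_j$, then $\langle\AAt,z\rangle=\sum_j\langle\AAt x_j,y_j\rangle$ is a finite sum of measurable (integrable) functions, so it is measurable (integrable). For a general $z\in E\widehat{\otimes}_\pi F$, I would use the standard representation $z=\sum_{j=1}^\iii x_j\otimes y_j$ with $\sum_j\nnoo{x_j}_E\nnoo{y_j}_F<+\iii$. Then $\langle \AAt,z\rangle=\sum_j\langle\AAt x_j,y_j\rangle$ for every $t$, since the series converges in norm and $\AAt$ is a continuous functional. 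As a pointwise limit of measurable partial sums, this function is measurable.

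The integrability part is the main obstacle, because pointwise convergence gives no control of the integrals. For this I would use a closed graph argument, as in \cite[Lemma~1.1]{KMM}. For fixed $x\in E$, define $\Phi_x\colon F\to L_1(\WWW,\mu)$ by $y\mapsto\langle \AAA_{(\cdot)}x,y\rangle$. This map is linear and has closed graph: if $y_k\to y$ and $\Phi_x y_k\to g$ in $L_1$, a subsequence converges a.e., and the pointwise limit is $\langle\AAt x,y\rangle$. So $\Phi_x$ is bounded. Next, $x\mapsto\Phi_x\in\BB(F,L_1)$ is linear, and its graph is closed by the same reasoning, so it is bounded too. This gives a constant $C$ with
$$\int_\WWW\zzaj{\langle\AAt x,y\rangle}\dt\leqslant C\nnoo{x}_E\nnoo{y}_F .$$
Then, by monotone convergence,
$$\int_\WWW\zzaj{\langle\AAt,z\rangle}\dt\leqslant\sum_j\int_\WWW\zzaj{\langle\AAt x_j,y_j\rangle}\dt\leqslant C\sum_j\nnoo{x_j}\nnoo{y_j}<+\iii,$$
which proves integrability and completes the argument.
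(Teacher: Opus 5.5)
Your proof is correct and follows essentially the same route as the paper. You use elementary tensors for the ``only if'' part, the absolutely convergent projective representation $z=\sum_j x_j\otimes y_j$ for measurability, and a closed graph argument to get $\int_\WWW|\langle\AAt x,y\rangle|\,d\mu(t)\leqslant C\|x\|_E\|y\|_F$ for integrability. There are two minor differences: you obtain the joint bound from a second closed graph argument for $x\mapsto\Phi_x\in\BB(F,\LJ)$, where the paper proves separate continuity in both variables and invokes boundedness of separately continuous bilinear maps; and you write out the final monotone convergence estimate, which the paper leaves implicit.
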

\begin{proof}
Let $\AAA$ be weakly$^*$ measurable (integrable) and $x\in E$ and $y\in F$ be arbitrary. As $x\otimes y\in E\widehat{\otimes}_\pi F,$ the function $t\mapsto\langle\AAt,x\otimes y\rangle=\langle\AAt x,y\rangle$ is measurable (integrable).

Conversely, let the scalar functions $t\mapsto\langle\AAt x,y\rangle$ be measurable, for all $x\in E$ and $y\in F.$ Every $z\in E\widehat{\otimes}_\pi F$ can be written in the form $\sum_{n=1}^\infty x_n\otimes y_n,$ for some sequences $(x_n)_{n\inN}$ in $E$ and $(y_n)_{n\inN}$ in $F$ such that $\sum_{n=1}^\infty \nnoj{x_n}_E\cdot\nnoj{y_n}_F < +\infty.$ It follows that the series $\langle\AAt,z\rangle=\sum_{n=1}^\infty \langle \AAt x_n,y_n\rangle$ absolutely converges implying weak$^*$ measurability of $\AAA,$ since all summands are measurable functions. If, in addition, all functions $t\mapsto\langle\AAt x,y\rangle$ are integrable, we define a bilinear mapping $B\colon E\times F\to L_1(\WWW,\mu)$ by $B(x,y)(t)\eqd\langle\AAt x,y\rangle,$ for $t\in\WWW.$ To prove that $B$ is bounded, it is sufficient to prove that it is separately continuous, by the \cite[1.2.]{DF}. For $y\in F$ we define a linear operator $B_y\colon E\rightarrow\LJ$ by $B_y(x)=B(x,y).$ We prove that $B_y$ is continuous using the Closed Graph Theorem. Let $\|x_n-x\|_E+\|B_y(x_n)-\varphi\|_{\LJ}\rightarrow0,$ as $n\rightarrow\infty,$ for some $\varphi\in\LJ,$ and $x,x_n\in E,$ for $n\inN.$ Then, there is a subsequence $(x_{n_k})_{k\inN}$ such that $B_y(x_{n_k})(t)=\langle\AAt x_{n_k},y\rangle\rightarrow\varphi(t),$ as $k\rightarrow\infty$ for $\mu$ a.e. $t\in\WWW.$ For such $t\in\WWW,$ as $\AAt\in\mathcalb{B}(E,F),$ we have $B_y(x_{n_k})(t)=\langle\AAt x_{n_k},y\rangle\rightarrow\langle\AAt x,y\rangle,$ as $k\rightarrow\infty,$ implying $B_y(x)(t)=\varphi(t)$ for $\mu$ a.e. $t\in\WWW.$ Thus, $B_y$ is a bounded linear operator. In the same fashion, we prove that for every $x\in E$ the linear operator $B_x\colon F\rightarrow\LJ$ defined by $B_x(y)=B(x,y)$ is also continuous, completing the proof.
\end{proof}

\begin{remark}
    In \cite{AK25}, the authors introduce the space of pointwise Dunford integrable functions, denoted by $L_{pD}(\WWW,\MM,\mu,\BB(E,F)),$ where $F$ is not required to be a dual space. The conditions defining this space (see Definition 2.8, and also Proposition 2.9 in the mentioned paper) are similar to, though stronger than, those in the previous lemma, and they apply to $\BB(E,F),$ for all Banach spaces $E$ and $F.$ We also note that in the case where $F$ is reflexive, the spaces $L_{pD}(\WWW,\MM,\mu,\BB(E,F))$ and $\GG(\WWW,\MM,\mu,\BB(E,F))$ are the same, according to Lemma \ref{merljivost}.
\end{remark}

For the o.v. function $\AAA\colon\WWW\rightarrow\BB(E,F),$ we define an o.v. function $\AAA^*\colon\WWW\rightarrow\BB(F^*,E^*)$ by $(\AAA^*)_t=\AAt^*,$ for $t\in\WWW.$ The next proposition deals with the weak$^*$ measurability and integrability of $\AAA^*.$ 

\begin{proposition} Let $\AAA\colon\WWW\rightarrow\BB(E,F^*)$ be weakly$^*$ measurable function, where $F$ is a reflexive Banach space. Then the o.v. function $\AAA^*\colon\WWW\rightarrow\BB(F^{**},E^*)$ is also weak$^*$ measurable. If, in addition, $\AAA$ is weakly$^*$ integrable, so is $\AAA^*,$ and for every $S\in\MM$ holds $(\int_S\AAA d\mu)^*=\int_S\AAA^*d\mu.$
\end{proposition}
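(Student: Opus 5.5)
The plan is to reduce everything to Lemma \ref{merljivost}, applied twice: once to $\AAA$ as a $\BB(E,F^*)$-valued function and once to $\AAA^*$ viewed as a $\BB(F^{**},E^*)$-valued function, i.e.\ with the pair $(E,F)$ replaced by $(F^{**},E)$. By that lemma, $\AAA^*$ is weakly$^*$ measurable (integrable) if and only if for every $\phi\in F^{**}$ and $x\in E$ the scalar function $t\mapsto\langle\AAt^*\phi,x\rangle$ is measurable (integrable).

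First I use reflexivity. Since $F$ is reflexive, every $\phi\in F^{**}$ equals $Jy$ for some $y\in F$, where $J$ is the canonical embedding. For $x\in E$ we then have
$$\langle\AAt^*\phi,x\rangle=\langle\phi,\AAt x\rangle=\langle\AAt x,y\rangle .$$
The function $t\mapsto\langle\AAt x,y\rangle$ is measurable (integrable) by Lemma \ref{merljivost} applied to $\AAA$. Hence the criterion holds for $\AAA^*$, which gives both weak$^*$ measurability and weak$^*$ integrability.

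Next comes the identity of integrals. Fix $S\in\MM$ and put $T=\int_S\AAA\,d\mu\in\BB(E,F^*)$ and $R=\int_S\AAA^*\,d\mu\in\BB(F^{**},E^*)$. The Gel'fand integral is characterized by its pairings against the predual. Testing against elementary tensors $\phi\otimes x$ with $\phi\in F^{**}$, $x\in E$, we get
$$\langle R\phi,x\rangle=\int_S\langle\AAt^*\phi,x\rangle\,d\mu(t)=\int_S\langle\AAt x,y\rangle\,d\mu(t)=\langle Tx,y\rangle=\langle T^*\phi,x\rangle,$$
where again $\phi=Jy$. For the step $\int_S\langle\AAt x,y\rangle\,d\mu(t)=\langle Tx,y\rangle$ I use the definition of the Gel'fand integral paired with $x\otimes y\in E\widehat\otimes_\pi F$, via \eqref{dualnostBEF}. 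Since this holds for all $x$ and $\phi$, we conclude $R=T^*$.

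The only point needing care is the duality setup for $\BB(F^{**},E^*)$. Its predual is $F^{**}\widehat\otimes_\pi E$, and the pairing convention is $\langle B,\phi\otimes x\rangle=\langle B\phi,x\rangle$. We need this so that Lemma \ref{merljivost} applies with $F^{**}$ in the role of $E$ and $E$ in the role of $F$. That is immediate from \eqref{dualnostBEF}, so I expect no real obstacle. Reflexivity is used only to identify every $\phi\in F^{**}$ with some $y\in F$. Without it, $t\mapsto\langle\phi,\AAt x\rangle$ need not be measurable.
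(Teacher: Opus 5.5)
Your proposal is correct and follows essentially the same route as the paper. Reflexivity identifies each $\phi\in F^{**}$ with some $y\in F$, so $\langle\AAt^*\phi,x\rangle=\langle\AAt x,y\rangle$; Lemma~\ref{merljivost} then gives weak$^*$ measurability/integrability of $\AAA^*$, and the same chain of pairings yields $\bigl(\int_S\AAA\,d\mu\bigr)^*=\int_S\AAA^*\,d\mu$. The only difference is that you make explicit the predual $F^{**}\widehat\otimes_\pi E$ of $\BB(F^{**},E^*)$ needed to apply Lemma~\ref{merljivost} to $\AAA^*$, which the paper uses tacitly.
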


\begin{proof}
    Let $x\in E$ and $y^{**}\in F^{**}$ be arbitrary. Since $F$ is reflexive, there exists (unique) $y\in F,$ such that for every $y^*\in F^*$ holds $\langle y^{**},y^*\rangle=\langle y^*,y\rangle.$ Now, we have
    \begin{equation*}
    \langle\AAt^*y^{**},x\rangle=\langle y^{**},\AAt x\rangle=\langle\AAt x,y\rangle.
    \end{equation*}
    Thus, a weak$^*$ measurability (integrability) of $\AAA$ implies a weak$^*$ measurability (integrability) of $\AAA^*.$ In the case of integrability, for $S\in\MM$ we have
    \begin{multline*}
        \left\langle\left(\int_S\AAA^* d\mu\right)y^{**},x\right\rangle=\int_S\langle\AAt^*y^{**},x\rangle d\mu(t)=\int_S\langle y^{**},\AAt x\rangle d\mu(t)\\
        =\int_S\langle\AAt x,y\rangle d\mu(t)=\left\langle\left(\int_S\AAA d\mu\right)x,y\right\rangle=\left\langle y^{**},\left(\int_S\AAA d\mu\right)x\right\rangle
    \end{multline*}
    proving that $(\int_S\AAA d\mu)^*=\int_S\AAA^*d\mu,$ as proclaimed. 
\end{proof}

\begin{remark}
    If the Banach space E is additionally assumed to be reflexive, the results align with those established in \cite[Proposition~2.26]{AK25}.
\end{remark}
In Lemma \ref{merljivost}, we concluded that the weak$^*$ measurability and integrability of an o.v. function $\AAA$ are entirely determined by its associated bilinear form. A similar result holds for evaluating $\|\AAA\|_\GG,$ as shown in the Hilbert space case (see \cite[Proposition~1.8]{M24}).

\begin{proposition}
Let $\AAA\colon\WWW\rightarrow\befz$ be weakly$^*$ integrable. Then, the following holds
\begin{equation}\label{geljfnorma}
\|\AAA\|_{\GG}=\sup_{\|x\|_E=\|y\|_F = 1}\int_{\WWW}|\langle\AAt x,y\rangle|d\mu(t).
\end{equation}
\end{proposition}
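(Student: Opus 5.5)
We have to show that the supremum over unit tensors $z\in E\widehat\otimes_\pi F$ of $\int_\WWW|\langle\AAt,z\rangle|\,d\mu(t)$, which is $\|\AAA\|_\GG$, equals the supremum of the same integral taken only over elementary tensors $x\otimes y$ with $\|x\|_E=\|y\|_F=1$. The plan is to prove the two inequalities separately.

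\emph{The inequality ``$\geqslant$''.} For $\|x\|_E=\|y\|_F=1$ we have $\pi(x\otimes y)=\|x\|_E\|y\|_F=1$ and $\langle\AAt,x\otimes y\rangle=\langle\AAt x,y\rangle$. So every integral on the right of \eqref{geljfnorma} is one of the integrals in the supremum defining $\|\AAA\|_\GG$, and the right-hand side is at most $\|\AAA\|_\GG$.

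\emph{The inequality ``$\leqslant$''.} Let $M$ denote the right-hand side of \eqref{geljfnorma}. By homogeneity, $\int_\WWW|\langle\AAt x,y\rangle|\,d\mu(t)\leqslant M\|x\|_E\|y\|_F$ for all $x\in E$, $y\in F$. This constant is finite because, as shown in the proof of Lemma \ref{merljivost}, the bilinear map $B(x,y)(t)=\langle\AAt x,y\rangle$ is bounded into $\LJ$.

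Now fix $z\in E\widehat\otimes_\pi F$ with $\pi(z)<1$ and choose a representation $z=\sum_{n=1}^\iii x_n\otimes y_n$ with $\sum_{n=1}^\iii\|x_n\|_E\|y_n\|_F<1$, which exists by the definition of the projective norm. For each $t$ the series $\langle\AAt,z\rangle=\sum_{n}\langle\AAt x_n,y_n\rangle$ converges, since $\AAt\in\befz$ is continuous on $E\widehat\otimes_\pi F$. Hence pointwise $|\langle\AAt,z\rangle|\leqslant\sum_n|\langle\AAt x_n,y_n\rangle|$. Applying the monotone convergence theorem (Tonelli for series) gives
\[
\int_\WWW|\langle\AAt,z\rangle|\,d\mu(t)\leqslant\sum_{n=1}^\iii\int_\WWW|\langle\AAt x_n,y_n\rangle|\,d\mu(t)\leqslant M\sum_{n=1}^\iii\|x_n\|_E\|y_n\|_F< M.
\]

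It remains to pass from $\pi(z)<1$ to $\pi(z)\leqslant1$. For $\pi(z)=1$, apply the bound to $rz$ with $r<1$ and let $r\to1$; homogeneity of the integral in $z$ makes this immediate. Taking the supremum over the closed unit ball then gives $\|\AAA\|_\GG\leqslant M$.

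The main point needing care is the second inequality. Specifically, one must justify exchanging the integral with the infinite sum, and know that the pointwise series actually represents $\langle\AAt,z\rangle$. Both follow from absolute convergence of the series and nonnegativity of the terms, exactly as in the measurability part of Lemma \ref{merljivost}, so no real obstruction is expected.
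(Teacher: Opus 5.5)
Your proof is correct and follows essentially the same route as the paper. The ``$\geqslant$'' direction uses unit elementary tensors $x\otimes y$, and the ``$\leqslant$'' direction uses a near-optimal projective representation $z=\sum_n x_n\otimes y_n$ together with termwise integration of $\sum_n|\langle\AAt x_n,y_n\rangle|$. The only difference is cosmetic: you work with $\pi(z)<1$ and rescale by $r\to1$, where the paper takes a representation with $\sum_n\|x_n\|_E\|y_n\|_F\leqslant1+\varepsilon$ and lets $\varepsilon\downarrow0$.
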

\begin{proof}
As for unit vectors $x\in E$ and $y\in F$ holds $\|x\otimes y\|_{E\widehat{\otimes}_\pi F}=\|x\|_E\cdot\|y\|_F=1,$ we have
\begin{equation*}
\int_{\WWW}|\langle\AAt x,y\rangle|d\mu(t)= \int_\WWW|\langle\AAt,x\otimes y\rangle|d\mu(t)\leqslant\sup_{\|T\|_{E\widehat{\otimes}_\pi F}=1}\int_\WWW|\langle\AAt,T\rangle|d\mu(t)=\|\AAA\|_\GG.
\end{equation*}
By taking the supremum over all unit vectors $x$ and $y$ we get the "$\geqslant$" in \eqref{geljfnorma}.

To prove the converse inequality, let $T\in E\widehat{\otimes}_\pi F$ be such that $\|T\|_{E\widehat{\otimes}_\pi F}=1$ and $\varepsilon>0.$ Then, we can find sequences $(x_n)_{n\inN}$ in $E$ and $(y_n)_{n\inN}$ in $F$ of nonzero vectors, such that $T = \sum_{n=1}^\iii x_n\otimes y_n$ and $\sum_{n=1}^{+\iii}\|x_n\|_E\cdot\|y_n\|_F\leqslant 1+\varepsilon.$ Now,
\begin{align*}
&\int_\WWW|\langle\AAt,T\rangle|d\mu(t)=\int_\WWW\left|\sum_{n=1}^{+\iii}\langle\AAt x_n,y_n\rangle\right|d\mu(t)\leqslant\sum_{n=1}^{+\iii}\int_\WWW|\langle\AAt x_n,y_n\rangle|d\mu(t)\\
=&\sum_{n=1}^{+\iii}\|x_n\|_E\cdot\|y_n\|_F\cdot\int_\WWW\left|\left\langle\AAt \frac{x_n}{\|x_n\|_E},\frac{y_n}{\|y_n\|_F}\right\rangle\right|d\mu(t)\\
\leqslant&(1+\varepsilon)\cdot\sup_{\|x\|_E=\|y\|_F=1}\int_{\WWW}|\langle\AAt x,y\rangle|d\mu(t).
\end{align*}
By letting $\varepsilon\downarrow0$ and taking the supremum over $\|T\|_{E\widehat{\otimes}_\pi F}=1$ we get the "$\leqslant$" in \eqref{geljfnorma}, completing the proof. 
\end{proof}
For a Banach Lattice $E$ its dual space $E^*$ has a natural order given by $y\geqslant 0$ iff $\langle y,x\rangle\geqslant 0$ for every positive $x\in E.$ Moreover, $E^*$ is also a Banach lattice. Also, if $E$ and $F$ are Banach lattices, an operator $T\colon E\to F^*$ is positive (by definition) if $Tx\geqslant 0$ for $x\geqslant 0$ and by previous discussion $Tx\geqslant 0$ if and only if $\langle Tx,y\rangle \geqslant$ for every $0\leqslant y\in F.$ Thus, $T\colon E\to F^*$ is positive if and only if $\langle Tx,y \rangle\geqslant 0$ for every positive $x\in E$ and $y\in F.$ Final conclusion provides us with the following
\bbl\label{intpoz}
Let $E$ and $F$ be Banach lattices and $\AAA\colon\WWW\to\befz$ weakly$^*$ integrable function such that $\AAt\geqslant 0$ for almost every $t\in\WWW.$ Then, for every measurable $S\subset\WWW,$ holds $\int_S\AAA d\mu \geqslant 0.$
\eel
\begin{proof}
First, we note that $\langle\AAt x,y\rangle\geqslant 0$ a.e. for positive $x\in E$ and $y\in F$ as $\AAt$ are positive operators a. e. Hence, by the definition of the Gel'fand integral we have
$$\left\langle\zzmt{\int_S \AAA d\mu} x,y\right\rangle = \int_S\left\langle\AAt x,y\right\rangle d\mu(t) \geqslant 0,$$
for positive $x\in E$ and $y\in F.$ Based on the discussion preceding the Lemma, the operator $\int_S \AAA d\mu$ is positive.
\end{proof}

\section{Main results}

We begin by presenting some sufficient conditions under which the Gel'fand integral of an operator-valued function is $(q,p)$-summing.
\begin{theorem}\label{bilin}
Let $\AAA\colon\WWW\rightarrow\befz$ be weakly$^*$ measurable function such that
\begin{equation}\label{bilinuslov}
\sum_{n=1}^\iii|\langle\AAt x_n, y_n\rangle|\in L_1(\WWW,\MM,\mu), \text{for every $(x_n)_{n=1}^\infty\in\lpwE$ and $(y_n)_{n=1}^\infty\in\ell_{q'}(F)$.}
\end{equation}
Then, for every $S\in\MM$ the operator $\int_S\AAA d\mu$ is $(q,p)$-summing and
\begin{equation}\label{konacno}
\pi_{q,p}\left(\int_S\AAA d\mu\right)\leqslant\sup_{\substack{\|(x_n)_{n=1}^\infty\|_{p,w}\leqslant 1\\ \|(y_n)_{n=1}^\infty\|_{q'}\leqslant 1}} \int_S\sum_{n=1}^\iii |\langle\AAt x_n,y_n\rangle|d\mu(t)<+\iii.
\end{equation}
\end{theorem}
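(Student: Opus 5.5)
The plan is to reduce everything to Lemma \ref{qpekv2}, and to get the finiteness of the supremum from a closed graph / uniform boundedness argument. First I check that $\AAA$ is weakly$^*$ integrable. Given $x\in E$ and $y\in F$, take the sequences $(x,0,0,\dots)\in\lpwE$ and $(y,0,0,\dots)\in\ell_{q'}(F)$. Then \eqref{bilinuslov} says that $t\mapsto|\langle\AAt x,y\rangle|$ lies in $L_1(\WWW,\MM,\mu)$. By Lemma \ref{merljivost}, $\AAA$ is weakly$^*$ integrable, so $T_S:=\int_S\AAA\,d\mu\in\befz$ is well defined for every $S\in\MM$.

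Next, fix $(x_n)_{n=1}^\iii\in\lpwE$ and $(y_n)_{n=1}^\iii\in\ell_{q'}(F)$. By the definition of the Gel'fand integral and the monotone convergence theorem,
$$\sum_{n=1}^\iii\zzao{\langle T_Sx_n,y_n\rangle}\leqslant\sum_{n=1}^\iii\int_S\zzao{\langle\AAt x_n,y_n\rangle}\,d\mu(t)=\int_S\sum_{n=1}^\iii\zzao{\langle\AAt x_n,y_n\rangle}\,d\mu(t)<+\iii .$$
Lemma \ref{qpekv2} then gives that $T_S$ is $(q,p)$-summing. Taking the supremum over the unit balls in \eqref{qpekv2pom} yields the inequality in \eqref{konacno}.

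The main obstacle is showing that the right-hand side of \eqref{konacno} is finite, which requires a uniform bound. For this I define the bilinear map
$$\Phi\colon\lpwE\times\ell_{q'}(F)\to\LJ,\qquad \Phi\zzmj{(x_n),(y_n)}(t)=\sum_{n=1}^\iii\zzao{\langle\AAt x_n,y_n\rangle}.$$
This map is not bilinear, only sublinear in each variable. So instead I use the map $\Psi$ valued in $L_1(\WWW,\mu;\ell_1)$, defined by
$$\Psi\zzmj{(x_n),(y_n)}=\zzmj{\langle\AAt x_n,y_n\rangle}_{n=1}^\iii .$$
By \eqref{bilinuslov}, $\Psi$ is well defined and genuinely bilinear, and $\|\Psi(\cdot,\cdot)\|$ equals $\int_\WWW\Phi\,d\mu$. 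As in the proof of Lemma \ref{merljivost}, boundedness follows from separate continuity, and each partial map is handled with the Closed Graph Theorem. Suppose $(x^{(k)})\to x$ in $\lpwE$ and $\Psi(x^{(k)},y)\to\varphi$ in $L_1(\ell_1)$. Passing to a subsequence, the convergence holds for a.e.\ $t$ in $\ell_1$, hence coordinatewise. Each coordinate $x^{(k)}_n\to x_n$ in $E$, and $\AAt$ is bounded, so $\langle\AAt x^{(k)}_n,y_n\rangle\to\langle\AAt x_n,y_n\rangle$. This identifies $\varphi=\Psi(x,y)$ almost everywhere. The other variable is handled symmetrically.

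Once $\Psi$ is bounded, we get
$$\int_S\sum_{n=1}^\iii\zzao{\langle\AAt x_n,y_n\rangle}\,d\mu(t)\leqslant\|\Psi\|\cdot\|(x_n)\|_{p,w}\cdot\|(y_n)\|_{q'},$$
so the supremum in \eqref{konacno} is at most $\|\Psi\|<+\iii$. One point needs care: measurability of $t\mapsto(\langle\AAt x_n,y_n\rangle)_n$ as an $\ell_1$-valued function, so that $\Psi$ genuinely lands in the Bochner space. If that is delicate, I will instead run the Closed Graph argument with the scalar functions $\sum_{n\leqslant N}$ and a uniform boundedness argument over $N$. Alternatively, I can run it with the family of bilinear maps $\zzmj{(x_n),(y_n)}\mapsto\sum_n\varepsilon_n(t)\langle\AAt x_n,y_n\rangle$ and avoid vector measurability issues.
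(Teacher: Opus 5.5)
Your proposal is correct. Its first two steps match the paper's proof: weak$^*$ integrability from the sequences $(x,0,0,\ldots)$, $(y,0,0,\ldots)$ together with Lemma~\ref{merljivost}, and then the termwise estimate fed into Lemma~\ref{qpekv2}. The two arguments differ only in proving that the supremum in \eqref{konacno} is finite. The paper omits this step, saying it is analogous to the Baire category argument given later for s.n.\ functions. That argument works directly with the nonnegative functional $S(x,y)=\int_\WWW\sum_{n}|\langle\AAt x_n,y_n\rangle|\,d\mu(t)$ on $\lpwE\times\ell_{q'}(F)$. Its sublevel sets $\{S\leqslant M\}$ cover the space by \eqref{bilinuslov}, and they are closed by coordinatewise convergence and Fatou's lemma. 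Baire's theorem puts a ball inside one of them, and subadditivity and absolute homogeneity of $S$ in each variable (the four-term splitting in \eqref{pomocna4}) move the bound to the unit ball. You instead linearize: you replace $S$ by the genuinely bilinear $\ell_1$-valued map $\Psi$ with $\|\Psi(x,y)\|=S(x,y)$. The Closed Graph argument of Lemma~\ref{merljivost} and the automatic boundedness of separately continuous bilinear maps then finish the job, with no Fatou or splitting step. Your route is shorter given these standard theorems. The paper's route uses only that $S$ is nonnegative, lower semicontinuous, and subadditive and absolutely homogeneous in each variable, so the same proof serves the s.n.-function theorem without choosing a linear target space or checking vector-valued measurability. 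The measurability concern you flag is harmless. The map $t\mapsto(\langle\AAt x_n,y_n\rangle)_{n}$ is the a.e.\ $\ell_1$-limit of its finite truncations, and these are strongly measurable. Alternatively, work in the scalar space $L_1$ on $\WWW\times\mathbb{N}$, viewed as a countable disjoint union of copies of $(\WWW,\mu)$. There measurability is immediate, and $\int\sum=\sum\int$ is just monotone convergence.
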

\begin{proof}
First, we provide the weak$^*$ integrability of $\AAA.$ Indeed, for every $x\in E$ and $y\in F,$ the sequences $(x,0,0,\ldots)$ and $(y,0,0,\ldots)$ belong to $\lpwE$ and $(y,0,0,\ldots)\in\ell_{q'}(F),$ respectively. Hence, by \eqref{bilinuslov} the function $t\mapsto\langle \AAt x,y \rangle$ belongs to $\LJ$ and therefore, by Lemma \ref{merljivost}, the function $\AAA$ is weakly$^*$ integrable. Specially, for every measurable $S\subset\WWW,$ the operator $\int_S\AAA d\mu$ is defined.

To prove that the operator $\int_S\AAA d\mu$ is $(q,p)$-summing, we use Lemma \ref{qpekv2}. For $(x_n)_{n=1}^\infty\in\lpwE$ and $(y_n)_{n=1}^\infty\in\ell_{q'}(F)$ holds
\begin{multline*}
\sum_{n=1}^\infty \zzac{\left\langle \zzmt{\int_S \AAA d\mu} x_n ,y_n\right\rangle} = \sum_{n=1}^\infty \zzat{\int_S \langle\AAt x_n ,y_n\rangle d\mu(t)}\\
\leqslant \sum_{n=1}^\infty \int_S \zzaj{\langle\AAt x_n ,y_n\rangle} d\mu(t) < +\infty,
\end{multline*}
where last estimate follows by \eqref{bilinuslov}. Hence, by the mentioned lemma, the operator $\int_S\AAA d\mu$ is $(q,p)$-summing, and moreover, by taking the supremum over $\|(x_n)_{n=1}^\infty\|_{p,w}\leqslant 1$ and $\|(y_n)_{n=1}^\infty\|_{q'}\leqslant 1$ we get the first inequality in \eqref{konacno}. The proof that the supremum in \eqref{konacno} is finite is similar to the proof of \eqref{Finormeuslov1} in the sequel and will be omitted.
\end{proof}

The previous Theorem can be further improved if $p=q.$ Namely, in that case, Banach space $\Pi_{q,p}(E,F^*)$ has a predual, and the following holds.
\begin{theorem}\label{Gintuslov}
Let $\AAA\colon\WWW\to\befz$ be weakly$^*$ measurable such that $\AAt\in\Pi_p(E,F^*),$ for $1\leqslant p<+\infty$ and almost every $t\in\WWW$ and the condition \eqref{bilinuslov} holds. Then $\AAA\in\GG(\WWW,\mathfrak{M},\mu,\Pi_p(E,F^*))$ and the following estimate holds
\begin{equation}\label{geljfnorm}
\nnoj{\AAA}_\GG \leqslant \sup_{\substack{\|(x_n)_{n=1}^\infty\|_{p,w}\leqslant 1\\ \|(y_n)_{n=1}^\infty\|_{p'}\leqslant 1}} \int_\WWW\sum_{n=1}^\iii |\langle\AAt x_n,y_n\rangle|d\mu(t) 
\end{equation}
\end{theorem}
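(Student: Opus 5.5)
The plan is to view $\AAA$ as a $\Pi_p(E,F^*)$-valued function and use the predual \eqref{predpsum}: $\Pi_p(E,F^*) = \zzmd{E\widehat{\otimes}_{d_{p'}}F}^*$. Weak$^*$ measurability, integrability and the norm $\nnoj{\cdot}_\GG$ are then computed by pairing $\AAt$ with tensors $z\in E\widehat{\otimes}_{d_{p'}}F$.

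\textbf{Step 1 (pairing with a tensor).} Every such $z$ has a representation $z=\sum_{n=1}^\iii x_n\otimes y_n$ with $(x_n)_{n=1}^\iii\in\lpwE$, $(y_n)_{n=1}^\iii\in\ell_{p'}(F)$ and
$$\nnoj{(x_n)_{n=1}^\iii}_{p,w}\cdot\nnoj{(y_n)_{n=1}^\iii}_{p'}\leqslant d_{p'}(z)+\varepsilon.$$
This is the standard completion argument: choose finite representations of a rapidly converging telescoping series and concatenate them. Under the duality, $\langle\AAt,z\rangle=\sum_{n}\langle\AAt x_n,y_n\rangle$ for a.e. $t$ with $\AAt\in\Pi_p(E,F^*)$. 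The series converges absolutely by Lemma \ref{qpekv2} with $q=p$, and is independent of the representation since it equals the action of the functional $\AAt$.

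\textbf{Step 2 (measurability and integrability).} Each summand is measurable, since $\AAA$ is weakly$^*$ measurable as a $\befz$-valued function. Hence $t\mapsto\langle\AAt,z\rangle$ is measurable, being an a.e. pointwise limit. By \eqref{bilinuslov} it is dominated by $\sum_n|\langle\AAt x_n,y_n\rangle|\in\LJ$, so it is integrable. Thus $\AAA\in\GG(\WWW,\MM,\mu,\Pi_p(E,F^*))$. One should also check that the resulting Gel'fand integral coincides with $\int_S\AAA\,d\mu$ from Theorem \ref{bilin}, by testing on elementary tensors $x\otimes y$.

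\textbf{Step 3 (the norm estimate).} For $d_{p'}(z)\leqslant1$, normalize the representation from Step 1 so that $\nnoj{(x_n)}_{p,w}\leqslant1$ and $\nnoj{(y_n)}_{p'}\leqslant 1+\varepsilon$ (the weighting of the two factors is arbitrary). Then
$$\int_\WWW|\langle\AAt,z\rangle|\,d\mu(t)\leqslant\int_\WWW\sum_n|\langle\AAt x_n,y_n\rangle|\,d\mu(t)\leqslant(1+\varepsilon)M,$$
where $M$ is the supremum on the right-hand side of \eqref{geljfnorm}. Letting $\varepsilon\downarrow0$ and taking the supremum over $z$ gives \eqref{geljfnorm}.

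\textbf{Main obstacle.} The key difficulty is showing that $M$ is finite, since otherwise the estimate is vacuous; Theorem \ref{bilin} also defers this point. My approach is a closed graph / uniform boundedness argument. Define
$$\Phi\colon\lpwE\times\ell_{p'}(F)\to\LJ,\qquad \Phi\zzmj{(x_n),(y_n)}(t)=\sum_n|\langle\AAt x_n,y_n\rangle|.$$
Replace it with the bilinear map $B\zzmj{(x_n),(y_n)}=\zzmj{\langle\AAt x_n,y_n\rangle}_n$, taking values in the Banach space $L_1(\WWW,\mu;\ell_1)$, which is well defined by \eqref{bilinuslov}. Then show that $B$ is separately continuous via the Closed Graph Theorem, passing to an a.e. convergent subsequence exactly as in Lemma \ref{merljivost}. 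Joint boundedness of $B$ then follows from \cite[1.2.]{DF}, and $M=\nnoo{B}<+\iii$.
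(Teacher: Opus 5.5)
Your Steps 1--3 are essentially the paper's proof: you pair $\AAt$ with $z=\sum_n x_n\otimes y_n\in E\widehat{\otimes}_{d_{p'}}F$, obtain measurability as a pointwise limit and integrability by domination via \eqref{bilinuslov}, and then normalize a near-optimal representation to get \eqref{geljfnorm}. The only difference there is that you build the representation by concatenating finite blocks, whereas the paper cites \cite[Proposition~6.10]{R}.

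Your extra argument that the right-hand side of \eqref{geljfnorm} is finite is not needed, since the inequality as stated holds trivially if that supremum is infinite. It is nonetheless correct: the bilinear map into $L_1(\WWW,\mu;\ell_1)$ has closed graph in each variable by the a.e.-subsequence argument of Lemma \ref{merljivost}. It is also a cleaner alternative to the Baire category/Fatou argument that the paper points to for the finiteness claim in Theorem \ref{bilin}.
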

\begin{proof}
Due to the duality mentioned in \eqref{predpsum}, all we have left is to prove the measurability and integrability of the functions $t\mapsto\langle\AAt,z\rangle$ for every $z\in E\widehat{\otimes}_{d_{p'}} F.$

By \cite[Proposition~6.10]{R}, every $z\in E\widehat{\otimes}_{d_{p'}} F$ is of the form $z = \sum_{n=1}^\infty x_n\otimes y_n$ where $(x_n)_{n=1}^\infty\in\ell^w_{p}(E)$ and $(y_n)_{n=1}^\infty\in\ell_{p',}(F).$ Moreover, the series $\sum_{n=1}^\infty x_n\otimes y_n$ converges in the $d_{p'}$ norm. Since $\AAt\in\Pi_p(E,F^*)$ for almost every $t,$ we also have $\langle\AAt,z\rangle = \sum_{n=1}^\infty \langle\AAt, x_n\otimes y_n\rangle = \sum_{n=1}^\infty \langle\AAt x_n,y_n\rangle.$ Finally, by the weak$^*$ measurability of $\AAA,$ scalar functions $t\mapsto\langle\AAt x_n,y_n\rangle$ are measurable, hence $t\mapsto\langle\AAt,z\rangle$ is measurable as the pointwise limit of the measurable functions. In addition, according to the estimate $\zzao{\langle\AAt,z\rangle}\leqslant\sum_{n=1}^\infty \zzao{\langle\AAt x_n,y_n\rangle},$ the integrability of $t\mapsto\langle\AAt,z\rangle$ is a direct consequence of \eqref{bilinuslov}.

To prove the estimate \eqref{geljfnorm}, again by \cite[Proposition~6.10]{R}, for $z\in E\widehat{\otimes}_{d_{p'}} F,$ such that $\|z\|_{E\widehat{\otimes}_{d_{p'}} F} = 1,$ and every $\varepsilon > 0$ there exist $(x_n)_{n=1}^\infty\in\ell_{p}^w(E)$ and $(y_n)_{n=1}^\infty\in\ell_{p',}(F)$ such that $1\leqslant\|(x_n)_{n=1}^\infty\|_{p,w}\cdot\|(y_n)_{n=1}^\infty\|_{p'}<1+\varepsilon.$ Hence, we get

\begin{align}\label{geljfintpom1}
    &\int_\WWW \zzaj{\langle \AAt,z\rangle}d\mu(t) \leqslant \int_\WWW \sum_{n=1}^\infty \zzaj{\langle\AAt x_n,y_n\rangle} d\mu(t) \notag\\
    = &\nnoj{(x_n)_{n=1}^\infty}_{p,w}\cdot\nnoj{(y_n)_{n=1}^\infty}_{p'}
\cdot\!\int_\WWW\! \sum_{n=1}^\infty \zzac{\!\left\langle\!\!\AAt \tfrac{1}{\nnoj{(x_n)_{n=1}^\infty}_{p,w}}x_n,\tfrac{1}{\nnoj{(y_n)_{n=1}^\infty}_{p'}}y_n\!\right\rangle\!} d\mu(t)\notag\\
\leqslant&(1+\varepsilon)\cdot\!\! \int_\WWW\! \sum_{n=1}^\infty \zzaj{\langle\AAt u_n,v_n\rangle} d\mu(t),
\end{align}

where $(u_n)_{n=1}^\iii = \frac{1}{\nnoj{(x_n)_{n=1}^\infty}_{p,w}}(x_n)_{n=1}^\iii$ and $(v_n)_{n=1}^\iii = \frac{1}{\nnoj{(y_n)_{n=1}^\infty}_{p'}}(y_n)_{n=1}^\iii$ are unit vectors in $\lpwE$ and $\ell_{p'}(F)$ respectively. By taking supremum over all unit vectors in the last term of \eqref{geljfintpom1} we get
$$\int_\WWW \zzaj{\langle \AAt,z\rangle}d\mu(t) \leqslant (1+\varepsilon) \cdot \sup_{\substack{\|(u_n)_{n=1}^\infty\|_{p,w}\leqslant 1\\ \|(v_n)_{n=1}^\infty\|_{p'}\leqslant 1}} \int_\WWW\sum_{n=1}^\iii |\langle\AAt u_n,v_n\rangle|d\mu(t),$$
for every unit vector $z$ and every $\varepsilon > 0.$ By taking supremum over $z$ and letting $\varepsilon \to 0,$ we get the desired estimate \eqref{geljfnorm}.
\end{proof}
\begin{remark}
We note that the assumption $\AAt\in\Pi_p(E,F^*)$ is superfluous for the conclusion that $\int_\WWW\AAA d\mu$ belongs to $\Pi_p(E,F^*),$ as can be seen in the proof. On the other hand, it is necessary to conclude that $\AAA\in\GG(\WWW,\mathfrak{M},\mu,\Pi_p(E,F^*)).$ For this reason, the mentioned assumption is omitted from Theorem \ref{bilin}.
\end{remark}
Similarly to \cite[Theorem~2.3]{M24}, the conditions of the previous theorem do not imply the integrability of the function $t\mapsto\pi_{q,p}(\AAt),$ even when it is measurable, as shown in the following example. It is taken from \cite[Example~2.6]{M24}, which is expected, as $\ccq=\Pi_{q,2}(\HH)$ isometrically, for every $q\geqslant2$ (see \cite[Theorem~10.3]{DJT}). Here, $\ccq$ denotes the Schaten--von Neumman ideal, i.e., the ideal of all compact operators $A\in\BH$ such that $\sum_{n=1}^\iii s_n^q(A)<+\iii,$ where $(s_n(A))_{n\in\mathbb{N}}$ is the sequence of singular values of the operator $A.$
\begin{example} We define
$\AAA\colon[0,1)\rightarrow\BH$ by $\AAt\eqd\sum_{n=1}^{+\infty}n\chi_{[\frac{1}{n+1},\frac{1}{n})}(t)e_n\otimes e_1^\star.$ Every $\AAt$ is a finite-rank operator, and hence in $\Pi_{q,2}(\HH).$ Let $(f_m)_{m\inN}\in\ell^w_2(\HH)$ and
$(g_m)_{m\inN}\in\ell_{q'}(\HH)$ be arbitrary.
Now, for every $t\in[\frac{1}{n},\frac{1}{n+1})$ we have that $\sum_{m=1}^{+\iii}|\langle\AAt f_m, g_m\rangle|=n\sum_{m=1}^{+\iii}|\langle f_m,e_1\rangle\langle e_n, g_m\rangle|,$ so

\begin{align*}
  &\int_{[0,1)}\sum_{n=1}^{+\iii}|\langle\AAt f_m, g_m\rangle|dm(t)= \sum_{n=1}^{+\iii}n\int_{\frac{1}{n+1}}^{\frac{1}{n}}\sum_{m=1}^{+\iii}|\langle f_m, e_1\rangle\langle e_n, g_m\rangle|dt\\
  =&\sum_{n=1}^{+\iii}\frac{1}{n+1}\sum_{m=1}^{+\iii}|\langle f_m, e_1\rangle\langle e_n, g_m\rangle|\leqslant
 M\sqrt{\sum_{n=1}^{+\iii}\left(\sum_{m=1}^{+\iii}|\langle f_m, e_1\rangle\langle e_n, g_m\rangle|\right)^2}\\ \leqslant &M\cdot\sum_{m=1}^{+\iii}\sqrt{\sum_{n=1}^{+\iii}|\langle f_m, e_1\rangle|^2\cdot|\langle e_n, g_m\rangle|^2}=
  M\cdot\sum_{m=1}^{+\iii}|\langle f_m, e_1\rangle|\cdot\|g_m\|_{\HH}\\
  \leqslant&M\sqrt[q]{\sum_{m=1}^{+\iii}|\langle f_m, e_1\rangle|^q}\cdot\sqrt[q']{\sum_{m=1}^{+\iii}\|g_m\|_\HH^{q'}}\leqslant M\|(f_m)_{m\inN}\|_{q,w}\cdot\|(g_m)_{m\inN}\|_{q'}<+\iii, 
\end{align*}
where $M=\sqrt{\sum_{n=1}^{+\iii}\frac{1}{(n+1)^2}}=\sqrt{\frac{\pi^2}{6}-1},$ since $\ell^w_2(\HH)\subset\ell^w_q(\HH).$ Thus, the condition \eqref{bilinuslov} holds. Proof that $\int_{[0,1)}\pi_{q,2}(\AAt)dm(t)=+\iii$ is the same as in the mentioned example (as norm in $\ccq$ is a $Q$-norm, since $q\geqslant2).$
\end{example}

Next, we provide an affirmative answer to the question raised in \cite[Remark~2.11]{M24}. First, we borrow some notation from \cite{M24}. Namely, $\Phi$ denotes the symmetric norming (s.n.) function, $\ccc_\Phi(\HH)$ denotes the corresponding ideal, and $B$ denotes the set of all orthonormal systems in the separable Hilbert space $\HH.$ Details can be found in \cite{M24} and the references therein.

Throughout the paper \cite{M24}, the author investigated the consequences of the condition $\int_\WWW \Phi\zzmj{\zzmo{\langle\AAt e_n,f_n\rangle}_{n=1}^\infty} \dt < +\infty$ for every $e,f\in B,$ regarding Gel'fand and Pettis integrability. In Remark~2.11, it was mentioned that it was not verified whether $\sup_{e,f\in B}\int_\WWW \Phi\zzmj{\zzmo{\langle\AAt e_n,f_n\rangle}_{n=1}^\infty} \dt$ is finite under the previous condition. To resolve this question, we first need the following technical lemma regarding the structure of $\ell_2^w (\HH)$ in the case of a separable Hilbert space $\HH.$
\bbl\label{ldvslb}
Let $\HH$ be a separable Hilbert space and $(x_n)_{n=1}^\infty \in\ell_2^w(\HH)$ such that $\nnoo{(x_n)_{n=1}^\infty}_{2,w}\leqslant 1.$ Then, $(x_n)_{n=1}^\infty$ can be written in the form
$$ x_n = f_n + g_n + h_n,$$
where $(f_n)_{n=1}^\infty,(g_n)_{n=1}^\infty$ and $(h_n)_{n=1}^\infty$ are orthonormal bases of $\HH.$
\eel
\begin{proof}
Due to the isometry given by \cite[Proposition~8.2]{DF}, an operator $A\colon\ell_2\to\HH$ defined by $A(e_n) = x_n$ (where $e_n$ is the $n$-th coordinate vector in $\ell_2$) is bounded and $\nnoo{A} = \nnoo{(x_n)_{n=1}^\infty}_{2,w} \leqslant 1.$ According to \cite[Theorem~3.4]{OP86}, as the spaces $\HH$ and $\ell_2$ are isomorphic Hilbert spaces, for the operator $A$ holds $A = U + V + W,$ where $U,V,W\colon\ell_2\to\HH$ are unitary operators and therefore
$$x_n = A e_n = U e_n + V e_n + W e_n.$$
Since the operators $U,V,W$ are unitary, the sequences $\zzmo{Ue_n}_{n=1}^\infty,\zzmo{Ve_n}_{n=1}^\infty$ and $\zzmo{We_n}_{n=1}^\infty$ are orthonormal bases of $\HH.$ 
\end{proof}
Finally, the previous lemma and the Baire Category Theorem establish the finiteness of the mentioned supremum.
\begin{theorem}Let $\HH$ be a separable Hilbert space and let $\AAA\colon\WWW\to\BH$ be weakly$^*$ measurable function such that $\AAt\in\ccc_\Phi(\HH)$ for almost all $t\in\WWW$ and for every orthonormal sequences $(e_n)_{n=1}^\infty$ and $(f_n)_{n=1}^\infty$ holds
\begin{equation}\label{Finormeuslov1}
\int_\WWW \Phi\zzmj{\zzmj{\langle\AAt e_n,f_n\rangle}_{n=1}^\infty} \dt < +\infty.
\end{equation}
Then
\begin{equation}
\sup_{e,f\in B} \int_\WWW \Phi\zzmj{\zzmj{\langle\AAt e_n,f_n\rangle}_{n=1}^\infty} \dt < +\infty.
\end{equation}
\end{theorem}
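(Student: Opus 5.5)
The plan is a Baire category argument on the Banach space $\ell_2^w(\HH)\times\ell_2^w(\HH)$ with the norm $\max\{\nnoo{(x_n)}_{2,w},\nnoo{(y_n)}_{2,w}\}$. Lemma \ref{ldvslb} is what lets us pass from orthonormal systems to arbitrary weakly $2$-summable sequences. Throughout, $\Phi$ is extended to arbitrary scalar sequences by $\Phi(\xi)=\sup_N\Phi(\xi_1,\ldots,\xi_N,0,\ldots)$, with values in $[0,+\iii]$. This extension is a countable supremum of continuous functions of finitely many coordinates. Hence $t\mapsto\Phi\zzmj{\zzmj{\langle\AAt x_n,y_n\rangle}_{n=1}^\infty}$ is measurable, because each $t\mapsto\langle\AAt x_n,y_n\rangle$ is measurable by weak$^*$ measurability. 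The extension is also lower semicontinuous under coordinatewise convergence.

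\emph{Step 1: extend the finiteness hypothesis.} I will show that $I(x,y)\eqd\int_\WWW\Phi\zzmj{\zzmj{\langle\AAt x_n,y_n\rangle}_{n=1}^\infty}\dt<+\iii$ for all $x=(x_n)_{n=1}^\iii$ and $y=(y_n)_{n=1}^\iii$ in $\ell_2^w(\HH)$. By homogeneity of $\Phi$ and bilinearity, it suffices to treat $\nnoo{x}_{2,w},\nnoo{y}_{2,w}\leqslant1$. By Lemma \ref{ldvslb}, write $x_n=f_n+g_n+h_n$ and $y_n=f'_n+g'_n+h'_n$ with orthonormal bases. Then $\langle\AAt x_n,y_n\rangle$ splits into nine sums. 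The triangle inequality for $\Phi$, together with \eqref{Finormeuslov1} applied to each of the nine pairs of orthonormal sequences, gives $I(x,y)<+\iii$.

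\emph{Step 2: Baire category.} For $k\inN$ let
\[
F_k=\{(x,y)\in\ell_2^w(\HH)\times\ell_2^w(\HH):I(x,y)\leqslant k\}.
\]
By Step 1 these sets cover the space. To see that each $F_k$ is closed, suppose $(x^m,y^m)\to(x,y)$ in the norm above. Convergence in $\ell_2^w$ implies coordinatewise norm convergence, so $\langle\AAt x^m_n,y^m_n\rangle\to\langle\AAt x_n,y_n\rangle$ for each $n$ and a.e.\ $t$. By lower semicontinuity of $\Phi$ and Fatou's lemma, $I(x,y)\leqslant\liminf_m I(x^m,y^m)\leqslant k$. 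The Baire Category Theorem then yields $k$, a point $(x^0,y^0)$ and $r>0$ such that the ball of radius $r$ around $(x^0,y^0)$ lies in $F_k$. I expect this closedness step to be the main technical point, in particular checking that the extended $\Phi$ is lower semicontinuous and that Fatou applies. Both should follow from the supremum-over-truncations description.

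\emph{Step 3: polarization.} Take $e,f\in B$. By Bessel's inequality, $\nnoo{(e_n)}_{2,w}\leqslant1$ and $\nnoo{(f_n)}_{2,w}\leqslant1$; finite systems are padded by zeros. Set $u=(e_n)_{n=1}^\iii$ and $v=(f_n)_{n=1}^\iii$. Bilinearity gives
\[
r^2\langle\AAt e_n,f_n\rangle=\langle\AAt(x^0+ru)_n,(y^0+rv)_n\rangle-\langle\AAt x^0_n,(y^0+rv)_n\rangle-\langle\AAt(x^0+ru)_n,y^0_n\rangle+\langle\AAt x^0_n,y^0_n\rangle .
\]
Each of the four pairs $(x^0+ru,y^0+rv)$, $(x^0,y^0+rv)$, $(x^0+ru,y^0)$ and $(x^0,y^0)$ lies within distance $r$ of $(x^0,y^0)$, hence in $F_k$. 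The triangle inequality and homogeneity of $\Phi$ then give
\[
\int_\WWW \Phi\zzmj{\zzmj{\langle\AAt e_n,f_n\rangle}_{n=1}^\infty}\dt\leqslant\frac{4k}{r^2}.
\]
This bound is uniform in $e,f\in B$, which proves the theorem.
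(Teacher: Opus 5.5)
Your proof is correct and follows the paper's argument essentially step by step. You use Lemma \ref{ldvslb} and the triangle inequality for $\Phi$ to get finiteness on all of $\ell_2^w(\HH)\times\ell_2^w(\HH)$, then closedness of the sublevel sets via truncation, lower semicontinuity and Fatou, then Baire, and finally a polarization estimate — the only difference being your one-sided identity around $(x^0,y^0)$ (bound $4k/r^2$) in place of the paper's symmetric $x_0\pm rx$, $y_0\pm ry$ version (bound $M_0/r^2$), which is cosmetic.
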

\begin{proof}
First, we demonstrate that under the assumptions of the Theorem, the integral $\int_\WWW \Phi\zzmo{\langle\AAt x_n,y_n\rangle} \dt$ remains finite for sequences $(x_n)_{n=1}^\infty$ and $(y_n)_{n=1}^\infty$ belonging to $\ell_2^w(\HH).$ Indeed, let $(x_n)_{n=1}^\iii,(y_n)_{n=1}^\infty\in\ell_2^w(\HH)$ such that $\nnoo{(x_n)_{n=1}^\infty}_{2,w}\leqslant 1$ and $\nnoo{(y_n)_{n=1}^\infty}_{2,w}\leqslant 1.$ By the Lemma \ref{ldvslb}, we have the bases $f^i$ and $g^j$ for $i,j=1,2,3$ such that $x_n = f^1_n + f^2_n + f^3_n$ and $y_n = g^1_n + g^2_n + g^3_n.$ Hence $\langle\AAt x_n,y_n\rangle = \sum_{i,j=1}^3 \langle\AAt f^i_n,g^j_n\rangle$ and by the triangle inequality for $\Phi$ holds $\Phi\zzmo{\zzmo{\langle\AAt x_n,y_n\rangle}_{n=1}^\infty}\leqslant \sum_{i,j=1}^3 \Phi\zzmo{\zzmo{\langle\AAt f^i_n,g^j_n\rangle}_{n=1}^\infty}.$ Integration further gives us
\begin{equation}\label{pomocna1}
\int_\WWW\Phi\zzmj{\zzmj{\langle\AAt x_n,y_n\rangle}_{n=1}^\infty} \dt\leqslant \sum_{i,j=1}^3 \int_\WWW\Phi\zzmj{\zzmj{\langle\AAt f^i_n,g^j_n\rangle}_{n=1}^\infty}\dt < +\infty
\end{equation}
where last inequality holds by the \eqref{Finormeuslov1}. By homogeneity, we get the claimed result for all $(x_n)_{n=1}^\iii,(y_n)_{n=1}^\infty\in\ell_2^w(\HH).$

Next, we prove that $\int_\WWW \Phi\zzmo{\zzmo{\langle\AAt x_n,y_n\rangle}_{n=1}^\infty}\dt$ is uniformly bounded for all $(x_n)_{n=1}^\infty,(y_n)_{n=1}^\infty\in\ell_2^w(\HH)$ such that $\nnoo{(x_n)_{n=1}^\infty}_{2,w}, \nnoo{(y_n)_{n=1}^\infty}_{2,w} \leqslant 1.$ Now, let us denote by $Z = \ell_2^w(\HH)\times \ell_2^w(\HH)$ a Banach space endowed with the norm $\|(x,y)\|_Z = \max\{\nnoo{(x_n)_{n=1}^\infty}_{2,w}, \nnoo{(y_n)_{n=1}^\infty}_{2,w}\},$ and for every $M\in\mathbb{N}$ we denote by $Z_M = \{(x,y)\in Z : \int_\WWW \Phi\zzmo{\zzmo{\langle\AAt x_n,y_n\rangle}_{n=1}^\infty}\dt \leqslant M\}.$ By the \eqref{pomocna1} we get $\cup_{M\in\mathbb{N}} Z_M = Z.$ To prove that $Z_M$ is closed, let $(x^m,y^m)\in Z_M,$ for every $m\in\mathbb{N},$ such that $(x^m,y^m)\to(x,y)$ as $m\to\infty.$ As $x^m\to x$ in $\ell_2^w(\HH),$ for every $n\in\mathbb{N}$ and every $h\in\HH,$ such that $\nnoo{h}_\HH\leqslant 1$ we get
$$\zzaj{\langle x^m_n - x_n, h\rangle}\leqslant \zzmt{\sum_{n=1}^\infty \zzaj{\langle x^m_n - x_n, h\rangle}^2}^{1/2} \leqslant \sup_{\nnoo{h}_\HH\leqslant 1} \zzmt{\sum_{n=1}^\infty \zzaj{\langle x^m_n - x_n, h\rangle}^2}^{1/2}.$$
By taking the supremum over $h$ we get $\nnoo{x^m_n - x_n}_\HH \leqslant \nnoo{x^m - x}_{2,w}\to 0.$ Moreover, as operators $\AAt$ are continuous, we also get $\langle\AAt x^m_n,y^m_n\rangle \to \langle\AAt x_n,y_n\rangle,$ as $m\to\infty,$ for every $n\in\mathbb{N}$ and every $t\in\WWW.$ Now, for $N\in\mathbb{N}$ and sequence $(a_n)_{n=1}^\infty$ let us denote by $(a^N_n)_{n=1}^\infty$ the sequence given by
$$a^N\eqd\zzmj{\underbrace{a_1,\ldots,a_N}_N,0,0,\ldots}.$$
According to continuity of $\Phi$ on the space of finite sequences, for every $N\in\mathbb{N}$ holds
$\Phi\zzmo{\zzmo{\langle\AAt x^m_n,y^m_n\rangle^N}_{n=1}^\infty} \to \Phi\zzmo{\zzmo{\langle\AAt x_n,y_n\rangle^N}_{n=1}^\infty},$ and by monotonicity properties of $\Phi$ we finally get
\begin{multline}\label{pomocna2}
\Phi\zzmj{\zzmj{\langle\AAt x_n,y_n\rangle^N}_{n=1}^\infty} = \lim_{m\to\infty} \Phi\zzmj{\zzmj{\langle\AAt x^m_n,y^m_n\rangle^N}_{n=1}^\infty}\\
= \varliminf_{m\to\infty} \Phi\zzmj{\zzmj{\langle\AAt x^m_n,y^m_n\rangle^N}_{n=1}^\infty} \leqslant
\varliminf_{m\to\infty} \Phi\zzmj{\zzmj{\langle\AAt x^m_n,y^m_n\rangle}_{n=1}^\infty}.
\end{multline}
As the last term in \eqref{pomocna2} is independent of $N,$ by taking supremum over $N\in\mathbb{N},$ again due to monotonicity properties of $\Phi$ we get
$$\sup_{N\in\mathbb{N}} \Phi\zzmj{\zzmj{\langle\AAt x_n,y_n\rangle^N}_{n=1}^\infty} = \Phi\zzmj{\zzmj{\langle\AAt x_n,y_n\rangle}_{n=1}^\infty} \leqslant \varliminf_{m\to\infty} \Phi\zzmj{\zzmj{\langle\AAt x^m_n,y^m_n\rangle}_{n=1}^\infty}.$$
Furthermore, by integrating the last inequality and by the Fatou's lemma, follows
\begin{multline}\label{pomocna3}
\int_\WWW \Phi\zzmj{\zzmj{\langle\AAt x_n,y_n\rangle}_{n=1}^\infty} d\mu(t) \leqslant \int_\WWW \varliminf_{m\to\infty} \Phi\zzmj{\zzmj{\langle\AAt x^m_n,y^m_n\rangle}_{n=1}^\infty} d\mu(t)\\ \leqslant \varliminf_{m\to\infty} \int_\WWW \Phi\zzmj{\zzmj{\langle\AAt x^m_n,y^m_n\rangle}_{n=1}^\infty} d\mu(t) \leqslant M
\end{multline}
where the last estimate holds due to $(x^m,y^m)\in Z_M.$ Therefore $(x,y)$ belongs to $Z_M,$ thus $Z_M$ is closed. We note that details and proofs of mentioned properties of s.n. functions can be found in sections 3. and 4. of Chapter III of \cite{gohkr}.

Now, by the Baire's Category Theorem, there exists $M_0$ such that the open ball $B((x^0,y^0),r)\in Z_{M_0}.$ For the ease of notation, let us denote by $S(x,y) = \int_\WWW \Phi\zzmo{\zzmo{\langle\AAt x_n,y_n\rangle}_{n=1}^\infty} d\mu(t)$ and we note that (due to the triangle inequality for $\Phi$) for $S$ holds $S(x+z,t) \leqslant S(x,t) + S(z,t)$ and $S(\alpha x,y) = \zzao{\alpha} S(x,y),$ and similar for the second variable. Therefore, for $(x,y)\in Z$ such that $\nnoo{(x,y)}_Z\leqslant 1,$ we get
\begin{align}
& 4S\zzmj{x,y} = \tfrac{1}{r^2} S\zzmj{rx+ x_0 + rx - x_0,ry+ y_0 + ry - y_0} \leqslant \notag\\
\tfrac{1}{r^2}&\Bigl( S\zzmj{x_0 + rx,y_0 + ry}+S\zzmj{x_0 + rx,y_0 - ry} + \Bigr.\notag\\
&\Bigl. S\zzmj{x_0 - rx,y_0 + ry} + S\zzmj{x_0 - rx,y_0 - ry}\Bigr) \leqslant \tfrac{4}{r^2}\cdot M_0 \label{pomocna4},
\end{align}
where last inequality in \eqref{pomocna4} holds due to $S(x_0\pm rx,y_0\pm ry)\in Z_{M_0}.$ Finally, by Bessel's inequality, every orthonormal sequence $(e_n)_{n=1}^\infty$ belongs to $\ell_2^w(\HH)$ and satisfies $\|(e_n)_{n=1}^\infty\|_{2,w} \leqslant 1.$ Equivalently, $B\times B\subset\{(x,y)\in Z :\|(x,y)\|_Z \leqslant 1\},$ hence
$$\sup_{e,f\in B} \!\int_\WWW \!\!\Phi\zzmo{\zzmo{\langle\AAt e_n,f_n\rangle}_{n=1}^\infty} d\mu(t) \!\leqslant\!\! \sup_{\|(x,y)\|_Z\leqslant 1}\!\int_\WWW \!\!\Phi\zzmo{\zzmo{\langle\AAt x_n,y_n\rangle}_{n=1}^\infty} d\mu(t) \!\leqslant\! \frac{M_0}{r^2} \!<\!\! +\iii.$$
\end{proof}
In the case of positive operators, the conclusions of previous Theorems \ref{bilin} and \ref{Gintuslov} hold under weaker assumptions, at least in one particular case. Due to Lemma \ref{pozneplp}, we can provide alternative sufficient conditions for Gel'fand integrability in the case of operators between continuous and $p$-integrable functions. More precisely,
\begin{theorem}
Let $K$ be a compact space, $(X,\nu)$ be a measure space, $1\leqslant p\leqslant q<+\iii$ and $\AAA\colon\WWW\to\BB(C(K),L_q(X,\nu))$  be a weakly$^*$ integrable o.v. function. If operators $\AAt$ are also positive, then for every measurable $S$ the operator $\int_S \AAA d\mu$ belongs to $\Pi_{q,p}(C(K),L_q(X,\nu)).$ Moreover, in the case $q=p$ holds $\AAA\in\GG(\WWW,\mathfrak{M},\mu,\Pi_p(C(K),L_p(X,\nu)).$
\end{theorem}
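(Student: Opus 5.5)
The first assertion should follow directly from two earlier lemmas, and the second from Theorem \ref{Gintuslov}, where positivity is used to verify condition \eqref{bilinuslov}. Throughout, $L_q(X,\nu)$ is regarded as the dual $F^*$ with $F=L_{q'}(X,\nu)$, as in the setting $\AAA\colon\WWW\to\befz$.

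For the first assertion, fix a measurable $S$. By Lemma \ref{intpoz}, the operator $\int_S\AAA\,d\mu$ is positive, since $\AAt\geqslant0$ a.e. Lemma \ref{pozneplp} then gives $\int_S\AAA\,d\mu\in\Pi_{q,p}(C(K),L_q(X,\nu))$ with $\pi_{q,p}(\int_S\AAA\,d\mu)=\|\int_S\AAA\,d\mu\|$.

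For the case $q=p$, I would apply Theorem \ref{Gintuslov} with $E=C(K)$ and $F=L_{p'}(X,\nu)$. Its hypothesis $\AAt\in\Pi_p(C(K),L_p(X,\nu))$ for a.e. $t$ again follows from Lemma \ref{pozneplp}, because $\AAt\geqslant0$ a.e. It therefore remains to verify \eqref{bilinuslov}. Take $(x_n)_{n=1}^\iii\in\ell_p^w(C(K))$ and $(y_n)_{n=1}^\iii\in\ell_{p'}(L_{p'}(X,\nu))$. For a positive operator between Banach lattices we have $|\AAt x|\leqslant\AAt|x|$, and hence
$$|\langle\AAt x_n,y_n\rangle|\leqslant\langle\AAt|x_n|,|y_n|\rangle\quad\text{for a.e. } t.$$
Each term on the right is a nonnegative integrable function of $t$. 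By the monotone convergence theorem and the definition of the Gel'fand integral, writing $T=\int_\WWW\AAA\,d\mu$, we get
$$\int_\WWW\sum_{n=1}^\iii|\langle\AAt x_n,y_n\rangle|\,d\mu(t)\leqslant\sum_{n=1}^\iii\langle T|x_n|,|y_n|\rangle\leqslant\pi_p(T)\cdot\nnoj{(|x_n|)_{n=1}^\iii}_{p,w}\cdot\nnoj{(|y_n|)_{n=1}^\iii}_{p'}.$$
The last inequality uses Lemma \ref{qpekv2} together with the first part of the theorem, which shows that $T$ is $p$-summing. Measurability of the series on the left is clear, since it is a pointwise limit of measurable partial sums.

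The step I expect to need the most care is showing that $(|x_n|)_{n=1}^\iii$ is weakly $p$-summable with the same norm as $(x_n)_{n=1}^\iii$. I would use the identification of $\ell_p^w(E)$ with operators $\ell_{p'}\to E$ (with $\ell_{p'}$ replaced by $c_0$ when $p=1$), which gives
$$\nnoj{(x_n)_{n=1}^\iii}_{p,w}=\sup_{\|a\|_{p'}\leqslant1}\Bigl\|\sum_{n=1}^\iii a_nx_n\Bigr\|_\iii=\sup_{k\in K}\Bigl(\sum_{n=1}^\iii|x_n(k)|^p\Bigr)^{1/p}.$$
This expression depends only on the moduli $|x_n|$, so the two weak norms coincide; this is exactly where the structure of $C(K)$ enters. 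Finally, $\nnoj{(|y_n|)_{n=1}^\iii}_{p'}=\nnoj{(y_n)_{n=1}^\iii}_{p'}$ holds trivially. Thus \eqref{bilinuslov} holds, and Theorem \ref{Gintuslov} yields $\AAA\in\GG(\WWW,\mathfrak{M},\mu,\Pi_p(C(K),L_p(X,\nu)))$.
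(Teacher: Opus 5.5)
Your proposal is correct and follows essentially the same route as the paper. The first claim comes from positivity of $\int_S\AAA\,d\mu$ via Lemma~\ref{intpoz} together with Lemma~\ref{pozneplp}; for $q=p$, you verify \eqref{bilinuslov} through $|\AAt x|\leqslant\AAt|x|$, the $C(K)$ identity $\nnoo{(x_n)_{n=1}^\iii}_{p,w}=\sup_{s\in K}\bigl(\sum_{n}|x_n(s)|^p\bigr)^{1/p}$, and Lemma~\ref{qpekv2} applied to the $p$-summing operator $\int_\WWW\AAA\,d\mu$, then feed this into Theorem~\ref{Gintuslov} (you are even slightly more explicit than the paper in checking its hypothesis $\AAt\in\Pi_p(C(K),L_p(X,\nu))$ a.e. and in invoking monotone convergence for the interchange of sum and integral).
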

\begin{proof}
Due to Proposition \ref{intpoz}, the operator $\int_S \AAA d\mu$ is positive for every $S\in\MM$ and hence, by the Lemma \ref{pozneplp} belongs to $\Pi_{q,p}(C(K),L_q(X,\nu)).$ Moreover, we will prove that $\AAA$ satisfies the condition \eqref{bilinuslov}, implying proclaimed Gel'fand integrability in the case $p=q.$

Let $(f_n)_{n=1}^\iii\in\ell_p^w(C(K))$ and $(\varphi_n)_{n=1}^\iii\in\ell_{q'}(L_{q'}(X,\nu))$ be arbitrary, and let $A:=\int_\WWW\AAA d\mu.$ Since $\nnoo{(f_n)_{n=1}^\infty}_{p,w} = \sup_{s\in K}(\sum_{n=1}^\infty |f_n(s)|^p)^{1/p}$ (see the proof of \cite[Examples~2.9(a)]{DJT}) we note that the sequence $(\zzao{f_n})_{n=1}^\infty$ also belongs to $\ell_{p}^w(C(K)).$ Also, by the definition of the space $\ell_{q'}(L_{q'}(X,\nu)),$ we have that $(|\varphi_n|)_{n=1}^\infty\in\ell_{q'}(L_{q'}(X,\nu)).$ Furthermore, as the operators $\AAt$ are positive and $L_q(X,\nu)$ is Dedekind complete (details can be found in \cite[Example~12.5]{Z}), by \cite[Proposition~2.2.6]{MN} we get $|\AAt f_n| \leqslant \AAt |f_n|,$ for every $n\inN$ and $t\in\WWW.$ Thus,
\begin{align*}
    &\int_\WWW\sum_{n=1}^\iii|\langle\AAt f_n,\varphi_n\rangle|d\mu(t)=\int_\WWW\sum_{n=1}^\iii\left|\int_X(\AAt f_n)(s)\!\cdot\!\varphi(s)d\nu(s)d\mu(t)\right|\\
    \leqslant&\int_\WWW\sum_{n=1}^\iii\int_X\!|\AAt f_n|(s)\!\cdot\!|\varphi_n|(s)d\nu(s)d\mu(t)\leqslant\sum_{n=1}^\infty\int_\WWW\int_X \!(\AAt |f_n|)(s)\!\cdot\!|\varphi_n|(s) d\nu(s)d\mu(t)\\
    =&\sum_{n=1}^\iii\int_\WWW\langle\AAt|f_n|,|\varphi_n|\rangle d\mu(t)=\sum_{n=1}^\iii\langle A|f_n|,|\varphi_n|\rangle<+\iii,
\end{align*}
due to Lemma \ref{qpekv2}, as operator $A$ is $(q,p)$-summing, being positive.

\end{proof}
Without the assumption of positivity, the conclusions of the previous Theorem do not hold. More precisely, condition \eqref{bilinuslov} needs not to be satisfied, at least for some appropriate compact set $K$ and measure space $(X,\nu),$ as the following example shows.
\begin{example}
Let $\WWW = \mathbb{N}$ with $\mu$ being the counting measure. By the Lemma \ref{merljivost} function $\AAA\colon\WWW\to\BB(E,F^*)$ is weak$^*$ integrable if
$$\sum_{n=1}^\infty \zzaj{\langle\AAA_n x,y\rangle} < +\infty$$
for every $x\in E$ and $y\in F.$ For $1\leqslant p\leqslant+\iii,$ let $T\in\BB(E,\ell_p)$ and let us denote  $P_n\colon\ell_p\to\ell_p$ an operator defined by
$$P_n (x_1,\ldots,x_{n-1},x_n,x_{n+1},\ldots) = (0,\ldots,0,x_n,0,\ldots),$$
for every $n\inN.$ Now, we define an o.v. function by $\AAA_n = P_n T.$ Then for $x\in E$ and $y\in\ell_{p'}$ (it is enough to take $y\in c_0$ in the case $p=1$) holds
\begin{multline}
\sum_{n=1}^\infty \zzaj{\langle \AAA_n x,y\rangle} = \sum_{n=1}^\infty \zzat{\sum_{m=1}^\infty \zzmo{P_n Tx}_m y_m} = \sum_{n=1}^\infty\zzaj{\zzmo{ Tx}_n y_n}\\
\leqslant \sqrt[p]{\sum_{n=1}^\infty\zzaj{\zzmo{ Tx}_n}^p} \cdot\sqrt[p']{\sum_{n=1}^\infty \zzaj{y_n}^{p'}} = \nnoj{Tx}_p \cdot\nnoj{y}_{p'}.
\end{multline}
Therefore $\AAA$ is weak$^*$ integrable, so for every $x\in E$ and $y\in\ell_{p'}$ holds
$$\left\langle\left(\int_\mathbb{N}\AAA d\mu\right) x,y\right\rangle=\int_{\mathbb{N}}\langle\AAA_nx,y\rangle d\mu(n)=\sum_{n=1}^\iii\langle P_nTx,y\rangle=\langle Tx,y\rangle,$$
where the last equality holds because $\sum_{n=1}^\infty P_n T$ converges in weak$^*$ topology to $T.$ Thus, $\int_\mathbb{N}\AAA d\mu=T.$

Next, for every $1\leqslant p<2$ (see \cite[Remark~3.8(a)]{DJT}) and $2<p<+\iii$ (see \cite[Theorem~7]{KW}), there exists $T\in\BB(C(K),\ell_p)$ (for appropriate compact $K$) that is not $p$-summing. For such $T,$ the previously defined o.v. function $n\mapsto\AAA_n = P_n T$ cannot satisfy condition \eqref{bilinuslov}. Moreover, we note that $P_n T$ is $p$-summing for every $n\in\mathbb{N}$ being a rank-one operator.

\end{example}

\section*{Statements and Declarations}

\noindent\textbf{Funding} The authors have been funded by an MPNTR grant, No.
174017, Serbia.\\

\noindent\textbf{Disclosure statement} On behalf of all authors, the corresponding author states that there is no conflict of interest.\\

\noindent\textbf{Authors' contributions} All authors have contributed equally to writing and preparing the manuscript text.

\end{document}